\newtheorem{theorem}{Theorem}[section]
\newtheorem{lemma}[theorem]{Lemma}
\newtheorem{proposition}[theorem]{Proposition}
\theoremstyle{definition}
\newtheorem{definition}[theorem]{Definition}
\newtheorem{example}[theorem]{Example}
\newtheorem*{solution*}{Solution}
\theoremstyle{remark}
\newtheorem{remark}[theorem]{Remark}
\numberwithin{equation}{section}
\DeclareMathOperator{\Hom}{Hom}
\DeclareMathOperator{\Hg}{H}
\DeclareMathOperator{\id}{id}
\DeclareMathOperator{\HH}{HH}
\begin{document}

\allowdisplaybreaks

\title{Replacing bar-like resolutions in a simplicial setting}

\author[S. Carolus]{Samuel Carolus}
\address{Department of Defense}
\email{samcarolus@gmail.com}

\author[J. Laubacher]{Jacob Laubacher}
\address{Department of Mathematics, St. Norbert College, De Pere, Wisconsin 54115}
\email{jacob.laubacher@snc.edu}

\author[S.D. Vitalbo]{Sydney D. Vitalbo}
\address{Department of Mathematics, St. Norbert College, De Pere, Wisconsin 54115}
\email{sydney.vitalbo@snc.edu}

\author[L.K. Widlarz]{Leah K. Widlarz}
\address{Department of Mathematics, St. Norbert College, De Pere, Wisconsin 54115}
\email{leah.widlarz@snc.edu}

\subjclass[2020]{Primary 18G31; Secondary 16E40, 16E05, 18G10}

\date{\today}

\keywords{Simplicial modules, Hochschild homology, Resolutions.\\\indent\emph{Corresponding author.} Jacob Laubacher \Letter~\href{mailto:jacob.laubacher@snc.edu}{jacob.laubacher@snc.edu} \phone~920-403-2961.}

\thanks{The third and fourth authors received funding from the Poss-Wroble Fellowship at St. Norbert College}

\begin{abstract}
It is well known that the bar resolution can be replaced with any projective resolution of the corresponding algebra when computing the Hochschild (co)homology of that algebra. This is, in fact, a feature of its construction via derived functors. For generalizations and extensions of the Hochschild (co)homology, one uses a bar-like resolution in a simplicial setting in order to accommodate the changing module structures in every dimension. In this note, we present a method in order to replace these bar-like resolutions.
\end{abstract}

\maketitle

\section{Introduction}

Hochschild cohomology theory, first defined in \cite{H} to study extensions of algebras and then later employed to examine deformation theory in \cite{G}, was realized as a construction using derived functors by Cartan and Eilenberg in \cite{CE}. The fundamental ingredient in this construction used the so-called bar resolution associated to the given algebra, and then employing properties of derived functors, one can replace that bar resolution with any projective resolution. This is incredibly powerful for computation.

In 2016, Staic introduced a generalization of the Hochschild cohomology (see \cite{S}). The aptly-named secondary Hochschild cohomology was used to investigate deformations of the given algebra $A$ that had a nontrivial $B$-algebra structure. In \cite{LSS} it was discovered that the secondary Hochschild cohomology could be constructed using a bar-like resolution in a simplicial setting. Here the main hurdle to overcome was that the module structure changes in every dimension. Further generalizations, as well as the higher order Hochschild cohomology over the $d$-sphere, for example, have now also been constructed in this setting (see \cite{CL} and \cite{Laub}, among others).

Due to the changing module structure described above, the secondary Hochschild cohomology cannot be viewed as a derived functor. However, it would be advantageous if the property of being able to replace the bar-like resolution within the construction could be obtained. The goal of this paper is to answer that question. In Section \ref{prelims}, we recall all the necessary background information regarding the simplicial setting in which we work in order to keep this paper as self-contained as possible. Section \ref{main}, therefore, houses our main results. We introduce the proper morphisms and homotopies which help answer the question regarding the conditions required to replace these bar-like resolutions.

\section{Preliminaries}\label{prelims}

We fix $\mathbbm{k}$ to be a field, and we let $\otimes:=\otimes_\mathbbm{k}$. Next, we assume all $\mathbbm{k}$-algebras are associative and have multiplicative unit. Most of the results in this section are from \cite{LSS}.

\subsection{Simplicial algebras}

One can begin with a simplicial $\mathbbm{k}$-algebra, which is just a simplicial object in the category of $\mathbbm{k}$-algebras, and then build from there. The formal definition is below:

\begin{definition}(\cite{LSS})
A \textbf{simplicial $\mathbbm{k}$-algebra} $\mathcal{A}$ is a collection of $\mathbbm{k}$-algebras $\{\mathcal{A}_n\}_{n\geq0}$ together with morphisms of $\mathbbm{k}$-algebras $\delta_i^\mathcal{A}:\mathcal{A}_n\longrightarrow\mathcal{A}_{n-1}$ and $\sigma_i^\mathcal{A}:\mathcal{A}_n\longrightarrow\mathcal{A}_{n+1}$ for all $0\leq i\leq n$ such that
\begin{equation}\label{presimcon}
\delta_i^\mathcal{A}\delta_j^\mathcal{A}=\delta_{j-1}^\mathcal{A}\delta_i^\mathcal{A}
\end{equation}
whenever $i<j$, and
\begin{equation}\label{simcon}
\begin{gathered}
\sigma_i^\mathcal{A}\sigma_j^\mathcal{A}=\sigma_{j+1}^\mathcal{A}\sigma_i^\mathcal{A}\text{~~~~~if~~~~~}i\leq j,\\
\delta_i^\mathcal{A}\sigma_j^\mathcal{A}=\sigma_{j-1}^\mathcal{A}\delta_i^\mathcal{A}\text{~~~~~if~~~~~}i<j,\\
\delta_i^\mathcal{A}\sigma_j^\mathcal{A}=\id_{\mathcal{A}_n}\text{~~~~~if~~~~~}i=j\text{~~~~~or~~~~~}i=j+1,\text{~~~~~and~~~~~}\\
\delta_i^\mathcal{A}\sigma_j^\mathcal{A}=\sigma_j^\mathcal{A}\delta_{i-1}^\mathcal{A}\text{~~~~~if~~~~~}i>j+1.
\end{gathered}
\end{equation}
\end{definition}

It is sufficient to only have face maps and still be viewed as a chain complex. In fact, later on in this paper, it is much more convenient (and less cluttered) to view things from a presimplicial setting (satisfying \eqref{presimcon} only). However, there are times when it is crucial we also make use of the degeneracy maps, which require more conditions to be satisfied (see \eqref{simcon}).

\begin{example}(\cite{LSS})
Let $A$ be a $\mathbbm{k}$-algebra. Then $\mathcal{A}(A\otimes A^{op})$ is a simplicial $\mathbbm{k}$-algebra by setting $\mathcal{A}_n=A\otimes A^{op}$ for all $n\geq0$, with $\delta_i^\mathcal{A}=\id_{A\otimes A^{op}}$ and $\sigma_i^\mathcal{A}=\id_{A\otimes A^{op}}$ for all $0\leq i\leq n$.
\end{example}

\begin{example}(\cite{LSS})
Let $A$ be a $\mathbbm{k}$-algebra, $B$ a commutative $\mathbbm{k}$-algebra, and $\varepsilon:B\longrightarrow A$ a morphism of $\mathbbm{k}$-algebras such that $\varepsilon(B)\subseteq\mathcal{Z}(A)$. Then $\mathcal{A}(A,B,\varepsilon)$ is a simplicial $\mathbbm{k}$-algebra by setting $\mathcal{A}_n=A\otimes B^{\otimes2n+1}\otimes A^{op}$ for all $n\geq0$, with
\begin{align*}
\delta_0^\mathcal{A}&(a\otimes\alpha_1\otimes\cdots\otimes\alpha_n\otimes\gamma\otimes\beta_1\otimes\cdots\otimes\beta_n\otimes b)\\
&=a\varepsilon(\alpha_1)\otimes\alpha_2\otimes\cdots\otimes\alpha_n\otimes\gamma\beta_1\otimes\beta_2\otimes\cdots\otimes\beta_n\otimes b,
\end{align*}
\begin{align*}
\delta_i^\mathcal{A}&(a\otimes\alpha_1\otimes\cdots\otimes\alpha_n\otimes\gamma\otimes\beta_1\otimes\cdots\otimes\beta_n\otimes b)\\
&=a\otimes\alpha_1\otimes\cdots\otimes\alpha_i\alpha_{i+1}\otimes\cdots\otimes\alpha_n\otimes\gamma\otimes\beta_1\otimes\cdots\otimes\beta_i\beta_{i+1}\otimes\cdots\otimes\beta_n\otimes b
\end{align*}
for $1\leq i\leq n-1$, and
\begin{align*}
\delta_n^\mathcal{A}&(a\otimes\alpha_1\otimes\cdots\otimes\alpha_n\otimes\gamma\otimes\beta_1\otimes\cdots\otimes\beta_n\otimes b)\\
&=a\otimes\alpha_1\otimes\cdots\otimes\alpha_{n-1}\otimes\alpha_n\gamma\otimes\beta_1\otimes\cdots\otimes\beta_{n-1}\otimes\varepsilon(\beta_n)b,
\end{align*}
along with
\begin{align*}
\sigma_0^\mathcal{A}&(a\otimes\alpha_1\otimes\cdots\otimes\alpha_n\otimes\gamma\otimes\beta_1\otimes\cdots\otimes\beta_n\otimes b)\\
&=a\otimes1\otimes\alpha_1\otimes\cdots\otimes\alpha_n\otimes\gamma\otimes1\otimes\beta_1\otimes\cdots\otimes\beta_n\otimes b,
\end{align*}
\begin{align*}
\sigma_i^\mathcal{A}&(a\otimes\alpha_1\otimes\cdots\otimes\alpha_n\otimes\gamma\otimes\beta_1\otimes\cdots\otimes\beta_n\otimes b)\\
&=a\otimes\alpha_1\otimes\cdots\otimes\alpha_i\otimes1\otimes\alpha_{i+1}\otimes\cdots\otimes\alpha_n\otimes\gamma\otimes\beta_1\otimes\cdots\otimes\beta_i\otimes1\otimes\beta_{i+1}\otimes\cdots\otimes\beta_n\otimes b
\end{align*}
for $1\leq i\leq n-1$, and
\begin{align*}
\sigma_n^\mathcal{A}&(a\otimes\alpha_1\otimes\cdots\otimes\alpha_n\otimes\gamma\otimes\beta_1\otimes\cdots\otimes\beta_n\otimes b)\\
&=a\otimes\alpha_1\otimes\cdots\otimes\alpha_n\otimes1\otimes\gamma\otimes\beta_1\otimes\cdots\otimes\beta_n\otimes1\otimes b.
\end{align*}
\end{example}

For more examples, one can also define the simplicial $\mathbbm{k}$-algebras $\mathcal{A}^2(A)$ and $\mathcal{A}^3(A)$, which are used in the construction for the higher order Hochschild (co)homology over the $2$-sphere (\cite{Laub}) and the $3$-sphere (\cite{CL}), respectively, as well as $\mathcal{A}(\mathcal{Q})$, which is used to define the tertiary Hochschild (co)homology over a quintuple $\mathcal{Q}$ (studied in \cite{CHL} and \cite{CL}).

\subsection{Simplicial modules}

As the next natural step, we can then look at simplicial modules over these simplicial $\mathbbm{k}$-algebras.

\begin{definition}(\cite{LSS})
We say that $\mathcal{M}$ is a \textbf{simplicial left module} over the simplicial $\mathbbm{k}$-algebra $\mathcal{A}$ if $\mathcal{M}=\{\mathcal{M}_n\}_{n\geq0}$ is a simplicial $\mathbbm{k}$-vector space (satisfies \eqref{presimcon} and \eqref{simcon}) together with a left $\mathcal{A}_n$-module structure on $\mathcal{M}_n$ for all $n\geq0$ such that we have the following natural compatibility conditions:
\begin{equation*}
\delta_i^\mathcal{M}(a_nm_n)=\delta_i^\mathcal{A}(a_n)\delta_i^\mathcal{M}(m_n)
\end{equation*}
and
\begin{equation*}
\sigma_i^\mathcal{M}(a_nm_n)=\sigma_i^\mathcal{A}(a_n)\sigma_i^\mathcal{M}(m_n)
\end{equation*}
for all $a_n\in\mathcal{A}_n$, for all $m_n\in\mathcal{M}_n$, and for all $0\leq i\leq n$.
\end{definition}

One can easily define a simplicial right module and a cosimplicial left module over a simplicial $\mathbbm{k}$-algebra in an analogous way.

\begin{example}[The bar resolution](\cite{LSS})\label{exbar}
Let $A$ be a $\mathbbm{k}$-algebra. Then $\mathcal{B}(A)$ is a simplicial left module over the simplicial $\mathbbm{k}$-algebra $\mathcal{A}(A\otimes A^{op})$ by setting $\mathcal{B}_n=A^{\otimes n+2}$ for all $n\geq0$ with the left $\mathcal{A}_n$-module structure given by
$$
(a\otimes b)\cdot(a_0\otimes a_1\otimes\cdots\otimes a_n\otimes a_{n+1})=aa_0\otimes a_1\otimes\cdots\otimes a_n\otimes a_{n+1}b.
$$
Moreover, we have that
$$
\delta_i^\mathcal{B}(a_0\otimes a_1\otimes\cdots\otimes a_n\otimes a_{n+1})=a_0\otimes\cdots\otimes a_ia_{i+1}\otimes\cdots\otimes a_{n+1}
$$
and
$$
\sigma_i^\mathcal{B}(a_0\otimes a_1\otimes\cdots\otimes a_n\otimes a_{n+1})=a_0\otimes\cdots\otimes a_i\otimes1\otimes a_{i+1}\otimes\cdots\otimes a_{n+1}
$$
for all $0\leq i\leq n$.
\end{example}

\begin{example}(\cite{LSS})
Let $A$ be a $\mathbbm{k}$-algebra and $M$ an $A$-bimodule. Then $\mathcal{M}(M)$ is a simplicial right module over the simplicial $\mathbbm{k}$-algebra $\mathcal{A}(A\otimes A^{op})$ by setting $\mathcal{M}_n=M$ for all $n\geq0$ with the right $\mathcal{A}_n$-module structure given by $m\cdot(a\otimes b)=bma$. Moreover, we have that $\delta_i^\mathcal{M}=\id_M$ and $\sigma_i^\mathcal{M}=\id_M$ for all $0\leq i\leq n$.
\end{example}

\begin{example}[The secondary bar resolution](\cite{LSS})\label{exsecbar}
Let $A$ be a $\mathbbm{k}$-algebra, $B$ a commutative $\mathbbm{k}$-algebra, and $\varepsilon:B\longrightarrow A$ a morphism of $\mathbbm{k}$-algebras such that $\varepsilon(B)\subseteq\mathcal{Z}(A)$. Then $\mathcal{B}(A,B,\varepsilon)$ is a simplicial left module over the simplicial $\mathbbm{k}$-algebra $\mathcal{A}(A,B,\varepsilon)$ by setting $\mathcal{B}_n=A^{\otimes n+2}\otimes B^{\otimes\frac{(n+1)(n+2)}{2}}$ for all $n\geq0$ with the left $\mathcal{A}_n$-module structure given by
$$
(a\otimes \alpha_1\otimes\cdots\otimes \alpha_n\otimes\gamma\otimes\beta_1\otimes\cdots\otimes\beta_n\otimes b)\cdot\left(\otimes
\begin{pmatrix}
a_0 & b_{0,1} & \cdots & b_{0,n} & b_{0,n+1}\\
1 & a_1 & \cdots & b_{1,n} & b_{1,n+1}\\
\vdots & \vdots & \ddots & \vdots & \vdots\\
1 & 1 & \cdots & a_n & b_{n,n+1}\\
1 & 1 & \cdots & 1 & a_{n+1}\\
\end{pmatrix}
\right)
$$
$$
=\otimes
\begin{pmatrix}
aa_0 & \alpha_1b_{0,1} & \cdots & \alpha_nb_{0,n} & \gamma b_{0,n+1}\\
1 & a_1 & \cdots & b_{1,n} & b_{1,n+1}\beta_1\\
\vdots & \vdots & \ddots & \vdots & \vdots\\
1 & 1 & \cdots & a_n & b_{n,n+1}\beta_n\\
1 & 1 & \cdots & 1 & a_{n+1}b\\
\end{pmatrix}.
$$
Moreover, we have that
$$
\delta_i^{\mathcal{B}}
\left(\otimes
\begin{pmatrix}
a_0 & b_{0,1} & \cdots & b_{0,n} & b_{0,n+1}\\
1 & a_1 & \cdots & b_{1,n} & b_{1,n+1}\\
\vdots & \vdots & \ddots & \vdots & \vdots\\
1 & 1 & \cdots & a_n & b_{n,n+1}\\
1 & 1 & \cdots & 1 & a_{n+1}\\
\end{pmatrix}
\right)
$$
$$
=
\otimes
\begin{pmatrix}
a_0 & b_{0,1} & \cdots & b_{0,i}b_{0,i+1} & \cdots & b_{0,n} & b_{0,n+1}\\
1 & a_1 & \cdots & b_{1,i}b_{1,i+1} & \cdots & b_{1,n} & b_{1,n+1}\\
\vdots & \vdots & \ddots & \vdots & \ddots & \vdots & \vdots\\
1 & 1 & \cdots & a_i\varepsilon(b_{i,i+1})a_{i+1} & \cdots & b_{i,n}b_{i+1,n} & b_{i,n+1}b_{i+1,n+1}\\
\vdots & \vdots & \ddots & \vdots & \ddots & \vdots & \vdots\\
1 & 1 & \cdots & 1 & \cdots & a_n & b_{n,n+1}\\
1 & 1 & \cdots & 1 & \cdots & 1 & a_{n+1}\\
\end{pmatrix}
$$
and
$$
\sigma_i^{\mathcal{B}}
\left(\otimes
\begin{pmatrix}
a_0 & b_{0,1} & \cdots & b_{0,n} & b_{0,n+1}\\
1 & a_1 & \cdots & b_{1,n} & b_{1,n+1}\\
\vdots & \vdots & \ddots & \vdots & \vdots\\
1 & 1 & \cdots & a_n & b_{n,n+1}\\
1 & 1 & \cdots & 1 & a_{n+1}\\
\end{pmatrix}
\right)
$$
$$
=
\otimes
\begin{pmatrix}
a_0 & b_{0,1} & \cdots & b_{0,i} & 1 & b_{0,i+1} & \cdots & b_{0,n} & b_{0,n+1}\\
1 & a_1 & \cdots & b_{1,i} & 1 & b_{1,i+1} & \cdots & b_{1,n} & b_{1,n+1}\\
\vdots & \vdots & \ddots & \vdots & \vdots & \vdots & \ddots & \vdots & \vdots\\
1 & 1 & \cdots & a_i & 1 & b_{i,i+1} & \cdots & b_{i,n} & b_{i,n+1}\\
1 & 1 & \cdots & 1 & 1 & 1 & \cdots & 1 & 1\\
1 & 1 & \cdots & 1 & 1 & a_{i+1} & \cdots & b_{i+1,n} & b_{i+1,n+1}\\
\vdots & \vdots & \ddots & \vdots & \vdots & \vdots & \ddots & \vdots & \vdots\\
1 & 1 & \cdots & 1 & 1 & 1 & \cdots & a_n & b_{n,n+1}\\
1 & 1 & \cdots & 1 & 1 & 1 & \cdots & 1 & a_{n+1}\\
\end{pmatrix}
$$
for all $0\leq i\leq n$.
\end{example}

We dwell on Example \ref{exsecbar} specifically, and how it contrasts to Example \ref{exbar}. Notice how the secondary bar resolution has a different module structure in each dimension, whereas the classic bar resolution has a consistent module structure throughout. However, both can be represented in this context as simplicial modules over simplicial algebras.

\begin{example}(\cite{LSS})
Let $A$ be a $\mathbbm{k}$-algebra, $B$ a commutative $\mathbbm{k}$-algebra, and $\varepsilon:B\longrightarrow A$ a morphism of $\mathbbm{k}$-algebras such that $\varepsilon(B)\subseteq\mathcal{Z}(A)$. Furthermore, let $M$ be an $A$-bimodule which is $B$-symmetric. Then $\mathcal{S}(M)$ is a simplicial right module over the simplicial $\mathbbm{k}$-algebra $\mathcal{A}(A,B,\varepsilon)$ by setting $\mathcal{S}_n=M$ for all $n\geq0$ with the right $\mathcal{A}_n$-module structure given by
$$
m\cdot(a\otimes \alpha_1\otimes\cdots\otimes \alpha_n\otimes\gamma\otimes\beta_1\otimes\cdots\otimes\beta_n\otimes b)=bma\varepsilon(\alpha_1\cdots\alpha_n\gamma\beta_1\cdots\beta_n).
$$
Moreover, we have that $\delta_i^\mathcal{S}=\id_M$ and $\sigma_i^\mathcal{S}=\id_M$ for all $0\leq i\leq n$.
\end{example}

One can also define other examples like $\mathcal{M}^2(M)$ and $\mathcal{B}^2(A)$ (see \cite{Laub}), $\mathcal{M}^3(M)$, $\mathcal{B}^3(A)$, $\mathcal{S}(\mathcal{Q})$, and $\mathcal{B}(\mathcal{Q})$ (see \cite{CL}), and $\mathcal{L}(A,B,\varepsilon)$ (see \cite{LSS}). These also have varying module structures in every dimension.

\subsection{Combining simplicial modules}

Finally, we recall the aptly named Tensor Lemma, which takes the role of the Tor functor. As is necessary, this allows us to accommodate a changing module structure in each dimension.

\begin{lemma}[Tensor Lemma]\emph{(\cite{LSS})}\label{TL}
Let $(\mathcal{X}, \delta_i^\mathcal{X}, \sigma_i^\mathcal{X})$ be a simplicial right module, and let $(\mathcal{Y}, \delta_i^\mathcal{Y}, \sigma_i^\mathcal{Y})$ be a simplicial left module, both over the simplicial $\mathbbm{k}$-algebra $\mathcal{A}$. Then $\mathcal{M}=(\mathcal{X}\otimes_\mathcal{A}\mathcal{Y}, D_i, S_i)$ is a simplicial $\mathbbm{k}$-module (satisfies \eqref{presimcon} and \eqref{simcon}) where $\mathcal{M}_n=X_n\otimes_{A_n}Y_n$ for all $n\geq0$, and we take
$$
D_i:\mathcal{M}_n\longrightarrow\mathcal{M}_{n-1}
$$
given by
$$
D_i(x_n\otimes_{A_n}y_n)=\delta_i^\mathcal{X}(x_n)\otimes_{A_{n-1}}\delta_i^\mathcal{Y}(y_n),
$$
and
$$
S_i:\mathcal{M}_n\longrightarrow\mathcal{M}_{n+1}
$$
given by
$$
S_i(x_n\otimes_{A_n}y_n)=\sigma_i^\mathcal{X}(x_n)\otimes_{A_{n+1}}\sigma_i^\mathcal{Y}(y_n),
$$
for all $0\leq i\leq n$.
\end{lemma}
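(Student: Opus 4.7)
The plan is to prove this in two stages: first establish that the candidate maps $D_i$ and $S_i$ are genuinely well-defined on the balanced tensor product $X_n \otimes_{A_n} Y_n$, and then check that the presimplicial and simplicial identities for the family $(D_i, S_i)$ reduce to the corresponding identities for $(\delta_i^\mathcal{X}, \sigma_i^\mathcal{X})$ and $(\delta_i^\mathcal{Y}, \sigma_i^\mathcal{Y})$ applied componentwise.

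For well-definedness, I would show that for each $a_n \in \mathcal{A}_n$, $x_n \in \mathcal{X}_n$, and $y_n \in \mathcal{Y}_n$, the naive formula respects the $A_n$-balance relation $x_n a_n \otimes_{A_n} y_n = x_n \otimes_{A_n} a_n y_n$. Using the compatibility conditions from the definition of a simplicial module, namely $\delta_i^\mathcal{X}(x_n a_n) = \delta_i^\mathcal{X}(x_n)\delta_i^\mathcal{A}(a_n)$ and $\delta_i^\mathcal{Y}(a_n y_n) = \delta_i^\mathcal{A}(a_n)\delta_i^\mathcal{Y}(y_n)$, both $D_i(x_n a_n \otimes_{A_n} y_n)$ and $D_i(x_n \otimes_{A_n} a_n y_n)$ collapse to $\delta_i^\mathcal{X}(x_n)\delta_i^\mathcal{A}(a_n) \otimes_{A_{n-1}} \delta_i^\mathcal{Y}(y_n)$, where the $A_{n-1}$-balance then allows the $\delta_i^\mathcal{A}(a_n)$ to slide across the tensor symbol. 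The argument for $S_i$ is the same, using the degeneracy compatibility relations in place of the face ones.

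For the simplicial identities, I would compute each relation in \eqref{presimcon} and \eqref{simcon} directly on an elementary tensor. For instance, for $i < j$,
\begin{align*}
D_i D_j(x_n \otimes_{A_n} y_n) &= \delta_i^\mathcal{X}(\delta_j^\mathcal{X}(x_n)) \otimes_{A_{n-2}} \delta_i^\mathcal{Y}(\delta_j^\mathcal{Y}(y_n)) \\
&= \delta_{j-1}^\mathcal{X}(\delta_i^\mathcal{X}(x_n)) \otimes_{A_{n-2}} \delta_{j-1}^\mathcal{Y}(\delta_i^\mathcal{Y}(y_n)) = D_{j-1} D_i(x_n \otimes_{A_n} y_n),
\end{align*}
and the four mixed identities relating $D_i$ with $S_j$ as well as the $S_i S_j$ identity follow by the same pattern of applying \eqref{presimcon} and \eqref{simcon} in each tensor factor separately.

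I expect the main (and essentially only) obstacle to be the well-definedness step, because that is where the interaction between the tensor balance relation over $A_n$ versus $A_{n-1}$ (or $A_{n+1}$ for $S_i$) actually needs the module compatibility axioms; once that is in place, all of the simplicial relations are formal consequences of the corresponding relations holding separately in $\mathcal{X}$ and $\mathcal{Y}$, since $D_i$ and $S_i$ are defined as the diagonal actions.
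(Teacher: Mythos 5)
Your proposal is correct, and it is the standard argument one would expect: note that this paper states the Tensor Lemma without proof, citing \cite{LSS}, so there is no in-text proof to compare against, but your two-step plan (well-definedness of $D_i$ and $S_i$ on the balanced tensor product via the compatibility conditions $\delta_i^{\mathcal{X}}(x_na_n)=\delta_i^{\mathcal{X}}(x_n)\delta_i^{\mathcal{A}}(a_n)$ and $\delta_i^{\mathcal{Y}}(a_ny_n)=\delta_i^{\mathcal{A}}(a_n)\delta_i^{\mathcal{Y}}(y_n)$, followed by componentwise verification of \eqref{presimcon} and \eqref{simcon}) is exactly the intended route. You also correctly isolate the only genuinely nontrivial point, namely that the balance relation over $A_n$ must be sent to the balance relation over $A_{n-1}$ (resp.\ $A_{n+1}$), which is precisely where the simplicial-module axioms are used.
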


The Hom Lemma from \cite{LSS}, which we omit here, produces a cochain complex in the same context. It combines a simplicial left module $\mathcal{X}$ and a cosimplicial left module $\mathcal{Y}$ over a common simplicial $\mathbbm{k}$-algebra $\mathcal{A}$, generating a cosimplicial $\mathbbm{k}$-module denoted by $\Hom_\mathcal{A}(\mathcal{X},\mathcal{Y})$.

\begin{example}(\cite{LSS})\label{ex1}
Using these simplicial structures and the Tensor Lemma \ref{TL}, one can see that the Hochschild homology of $A$ with coefficients in $M$, denoted $\Hg_\bullet(A,M)$, can be viewed as the homology of the chain complex $\mathcal{M}(M)\otimes_{\mathcal{A}(A\otimes A^{op})}\mathcal{B}(A)$. In notation, we have that
$$
\Hg_\bullet(A,M)=\Hg_\bullet(\mathcal{M}(M)\otimes_{\mathcal{A}(A\otimes A^{op})}\mathcal{B}(A)).
$$
\end{example}

\begin{example}(\cite{LSS})\label{ex2}
Again using the Tensor Lemma \ref{TL}, along with previously established examples, one gets that the secondary Hochschild homology of the triple $(A,B,\varepsilon)$ with coefficients in $M$ (introduced in \cite{S} and more thoroughly investigated in \cite{L}, and its cohomology in \cite{SS}), denoted $\Hg_\bullet((A,B,\varepsilon);M)$, can be realized as the homology of the chain complex $\mathcal{S}(M)\otimes_{\mathcal{A}(A,B,\varepsilon)}\mathcal{B}(A,B,\varepsilon)$. Notationally, we have that
$$
\Hg_\bullet((A,B,\varepsilon);M)=\Hg_\bullet(\mathcal{S}(M)\otimes_{\mathcal{A}(A,B,\varepsilon)}\mathcal{B}(A,B,\varepsilon)).
$$
\end{example}

One can also see that the secondary Hochschild homology associated to a triple $(A,B,\varepsilon)$ is $\HH_\bullet(A,B,\varepsilon)=\Hg_\bullet(\mathcal{L}(A,B,\varepsilon)\otimes_{\mathcal{A}(A,B,\varepsilon)}\mathcal{B}(A,B,\varepsilon))$ (defined in \cite{LSS}, and further studied in \cite{L2}, and its cohomology in \cite{BHL}), that the higher order Hochschild homology over the $d$-sphere is $\Hg_\bullet^{S^d}(A,M)=\Hg_\bullet(\mathcal{M}^d(M)\otimes_{\mathcal{A}^d(A)}\mathcal{B}^d(A))$ for $d=2$ (\cite{Laub}) and $d=3$ (\cite{CL}), and that the tertiary Hochschild homology of the quintuple $\mathcal{Q}$ with coefficients in $M$ is $\Hg_\bullet(\mathcal{Q};M)=\Hg_\bullet(\mathcal{S}(\mathcal{Q})\otimes_{\mathcal{A}(\mathcal{Q})}\mathcal{B}(\mathcal{Q}))$ (see \cite{CL}).

\section{Main Results}\label{main}

For the sake of computation, it is natural to wonder when we can replace a simplicial left module. In particular, following the many examples above, our goal is to replace the bar-like resolutions (like $\mathcal{B}(A)$, $\mathcal{B}(A,B,\varepsilon)$, $\mathcal{B}^2(A)$, $\mathcal{B}^3(A)$, and $\mathcal{B}(\mathcal{Q})$). The aim of this section is to approach that topic.

We fix $\mathcal{A}$ to be a simplicial $\mathbbm{k}$-algebra and $\mathcal{B}$ and $\mathcal{C}$ to be simplicial left modules over $\mathcal{A}$.

\begin{definition}
We say that $f:\mathcal{B}\longrightarrow\mathcal{C}$ is a \textbf{presimplicial morphism} if there is a family of $\mathcal{A}_n$-module morphisms $f_n:\mathcal{B}_n\longrightarrow\mathcal{C}_n$ such that
\begin{equation}\label{SM}
f_{n-1}\delta_i^\mathcal{B}=\delta_i^\mathcal{C} f_n
\end{equation}
for all $n\geq0$ and $0\leq i\leq n$.
\end{definition}

\begin{definition}\label{PreSH}
A \textbf{presimplicial homotopy $h$} between two presimplicial morphisms $f,g:\mathcal{B}\longrightarrow\mathcal{C}$ is a family of maps $h_i:\mathcal{B}_n\longrightarrow\mathcal{C}_{n+1}$ for $0\leq i\leq n$ and $n\geq0$ such that
\begin{enumerate}[(i)]
\item $h_i(a_nb_n)=\sigma_i^{\mathcal{A}}(a_n)h_i(b_n)$ for all $a_n\in\mathcal{A}_n$ and $b_n\in\mathcal{B}_n$, and
\item the following are satisfied:
\begin{equation}\label{SH}
\begin{gathered}
\delta_i^{\mathcal{C}}h_j=h_{j-1}\delta_i^{\mathcal{B}}\hspace{.15in}\text{for}\hspace{.15in}i<j,\\
\delta_i^{\mathcal{C}}h_i=\delta_i^{\mathcal{C}}h_{i-1}\hspace{.15in}\text{for}\hspace{.15in}0<i\leq n,\\
\delta_i^{\mathcal{C}}h_j=h_j\delta_{i-1}^{\mathcal{B}}\hspace{.15in}\text{for}\hspace{.15in}i>j+1,\\
\delta_0^{\mathcal{C}}h_0=f_n,\\
\delta_{n+1}^{\mathcal{C}}h_n=g_n.
\end{gathered}
\end{equation}
\end{enumerate}
\end{definition}

\begin{remark}
In the context of Definition \ref{PreSH}, it appropriate to say that the two presimplicial morphisms $f:\mathcal{B}\longrightarrow\mathcal{C}$ and $g:\mathcal{B}\longrightarrow\mathcal{C}$ are \emph{presimplicially homotopic}. When this is the case, we will write $f\sim g$.
\end{remark}

\begin{lemma}
Presimplicial homotopy defines an equivalence relation.
\end{lemma}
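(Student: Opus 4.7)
My plan is to exhibit, in each case, an explicit presimplicial homotopy witnessing the required relation. The driving observation is that we sit inside $\mathbbm{k}$-modules, so the constructions for symmetry and transitivity can be obtained by taking linear combinations of known homotopies, something unavailable in the setting of general simplicial sets.

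For reflexivity I would take $h_i := \sigma_i^{\mathcal{C}} f_n$ for $0 \le i \le n$. Condition (i) of Definition \ref{PreSH} holds because $f_n$ is $\mathcal{A}_n$-linear and $\sigma_i^{\mathcal{C}}$ intertwines the simplicial algebra and module actions. The five relations in \eqref{SH} then reduce, respectively, to the simplicial identities $\delta_i^{\mathcal{C}} \sigma_j^{\mathcal{C}} = \sigma_{j-1}^{\mathcal{C}} \delta_i^{\mathcal{C}}$ (for $i<j$), the two $\delta_i \sigma_j = \id$ identities (for $i = j$ and $i = j+1$), and $\delta_i^{\mathcal{C}} \sigma_j^{\mathcal{C}} = \sigma_j^{\mathcal{C}} \delta_{i-1}^{\mathcal{C}}$ (for $i > j+1$), combined with the intertwining \eqref{SM} for $f$.

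For symmetry, suppose $h$ realizes $f \sim g$. I would define $h'_i := \sigma_i^{\mathcal{C}}(f_n + g_n) - h_i$. Linearity of $\sigma_i^{\mathcal{C}}$ and the $\mathcal{A}_n$-linearity of $f_n + g_n$ give condition (i) immediately. For the face identities in \eqref{SH}, the three cases $i<j$, $0<i\le n$ (the middle identity), and $i>j+1$ transfer linearly from the corresponding identities for $h$ and for the reflexive homotopy $\sigma^{\mathcal{C}} \circ (f+g)$. The endpoint checks are
$$
\delta_0^{\mathcal{C}} h'_0 = (f_n+g_n) - f_n = g_n, \qquad \delta_{n+1}^{\mathcal{C}} h'_n = (f_n+g_n) - g_n = f_n,
$$
which is exactly what a homotopy from $g$ to $f$ requires. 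Transitivity proceeds analogously: given $h$ from $f$ to $g$ and $h'$ from $g$ to $k$, I would set $h''_i := h_i + h'_i - \sigma_i^{\mathcal{C}} g_n$; linearity again gives the three face identities and condition (i), while the endpoints compute to $\delta_0^{\mathcal{C}} h''_0 = f_n + g_n - g_n = f_n$ and $\delta_{n+1}^{\mathcal{C}} h''_n = g_n + k_n - g_n = k_n$.

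The step I view as the real point is symmetry: classically, simplicial homotopy between morphisms of simplicial sets fails to be symmetric in the absence of Kan fibrancy, so it is worth flagging that the argument succeeds here precisely because the ambient category is additive and the module-compatibility condition (i) is preserved under $\mathbbm{k}$-linear combinations. Once symmetry is handled, transitivity is essentially the same construction with a different book-keeping term, and reflexivity is the easiest of the three.
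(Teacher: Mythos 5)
Your proposal is correct and follows essentially the same route as the paper: the three witnessing homotopies $\sigma_i^{\mathcal{C}}f_n$, $\sigma_i^{\mathcal{C}}(f_n+g_n)-h_i$, and $h_i+h'_i-\sigma_i^{\mathcal{C}}g_n$ are exactly the ones the paper uses for reflexivity, symmetry, and transitivity. In fact you carry out more of the verification (the endpoint computations and the preservation of condition (i) under $\mathbbm{k}$-linear combinations) than the paper, which leaves these checks to the reader.
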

\begin{proof}
First observe that since $\mathcal{C}$ is a simplicial left module over $\mathcal{A}$, it is endowed with degeneracy maps $\sigma_i^{\mathcal{C}}$. These are necessary in what follows.

To see that $f\sim f$, consider $h$ with the family of maps $h_i:B_n\longrightarrow C_{n+1}$ for $0\leq i\leq n$ given by $h_i:=\sigma_i^{\mathcal{C}}f_n$. One can then check the necessary conditions in Definition \ref{PreSH} are satisfied in order to obtain reflexivity.

For symmetry, we first suppose that $f\sim g$ under the presimplicial homotopy $h$. To see that $g\sim f$ we take $t$ as follows: define the family of maps $t_i:B_n\longrightarrow C_{n+1}$ for $0\leq i\leq n$ by $t_i:=\sigma_i^{\mathcal{C}}(f_n+g_n)-h_i$.

Finally, for transitivity, we suppose that $f\sim g$ under the presimplicial homotopy $h$, and we suppose that $g\sim l$ under the presimplicial homotopy $t$. To see that $f\sim l$, we define $s$ with the family of maps $s_i:B_n\longrightarrow C_{n+1}$ for $0\leq i\leq n$ by $s_i:=h_i+t_i-\sigma_i^{\mathcal{C}}g_n$.

Hence, presimplicial homotopy defines an equivalence relation.
\end{proof}

\begin{definition}\label{PSHE}
We say that a presimplicial morphism $f:\mathcal{B}\longrightarrow\mathcal{C}$ is a \textbf{presimplicial homotopy equivalence} if there exists a presimplicial morphism $g:\mathcal{C}\longrightarrow\mathcal{B}$ such that there is a presimplicial homotopy between $gf$ and $\id_{\mathcal{B}}$, and a presimplicial homotopy between $fg$ and $\id_{\mathcal{C}}$.
\end{definition}

\subsection{Replacing the simplicial left modules}

Here we present a notion of how to replace these bar-like resolutions.

\begin{proposition}\label{Replace}
Let $\mathcal{M}$ be a simplicial right module over $\mathcal{A}$. If there exists a presimplicial homotopy equivalence $f:\mathcal{B}\longrightarrow\mathcal{C}$, then $$\Hg_\bullet(\mathcal{M}\otimes_\mathcal{A}\mathcal{B})\cong\Hg_\bullet(\mathcal{M}\otimes_\mathcal{A}\mathcal{C}).$$
\end{proposition}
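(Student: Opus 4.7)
The plan is to push the presimplicial homotopy equivalence $f$ through the construction $\mathcal{M}\otimes_{\mathcal{A}}-$ provided by the Tensor Lemma \ref{TL}, and then to convert the resulting presimplicial homotopy on the tensor product into an ordinary chain homotopy between the associated chain complexes with differential $\partial=\sum_{i=0}^n(-1)^iD_i$.

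First, I would let $g:\mathcal{C}\longrightarrow\mathcal{B}$ be the presimplicial morphism witnessing the homotopy equivalence as in Definition \ref{PSHE}, and set
$$
F_n:=\id_{\mathcal{M}_n}\otimes f_n,\qquad G_n:=\id_{\mathcal{M}_n}\otimes g_n.
$$
Because $f_n$ and $g_n$ are $\mathcal{A}_n$-module morphisms, both descend to the balanced tensor products, and the presimplicial identity \eqref{SM} combined with the Tensor Lemma formula $D_i(m_n\otimes b_n)=\delta_i^{\mathcal{M}}(m_n)\otimes\delta_i^{\mathcal{B}}(b_n)$ immediately yields $D_iF_n=F_{n-1}D_i$ (and likewise for $G$), so $F$ and $G$ are chain maps between $\mathcal{M}\otimes_{\mathcal{A}}\mathcal{B}$ and $\mathcal{M}\otimes_{\mathcal{A}}\mathcal{C}$.

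Next, I would lift the presimplicial homotopy $h$ between $gf$ and $\id_{\mathcal{B}}$ to maps
$$
\widetilde{h}_i(m_n\otimes b_n):=\sigma_i^{\mathcal{M}}(m_n)\otimes h_i(b_n).
$$
This is where both clauses of Definition \ref{PreSH} earn their keep: item (i) gives $h_i(a_nb_n)=\sigma_i^{\mathcal{A}}(a_n)h_i(b_n)$, while right-module compatibility gives $\sigma_i^{\mathcal{M}}(m_na_n)=\sigma_i^{\mathcal{M}}(m_n)\sigma_i^{\mathcal{A}}(a_n)$, so both sides of $\widetilde{h}_i(m_na_n\otimes b_n)=\widetilde{h}_i(m_n\otimes a_nb_n)$ reduce to $\sigma_i^{\mathcal{M}}(m_n)\otimes\sigma_i^{\mathcal{A}}(a_n)h_i(b_n)$ once $\sigma_i^{\mathcal{A}}(a_n)\in\mathcal{A}_{n+1}$ is slid across the balanced tensor, establishing well-definedness. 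The identities \eqref{SH} then transfer termwise through the Tensor Lemma formulas for $D_i$ and $S_i$ to show that $\widetilde{h}$ is a presimplicial homotopy from $GF$ to $\id_{\mathcal{M}\otimes_{\mathcal{A}}\mathcal{B}}$, and a symmetric construction handles the homotopy between $fg$ and $\id_{\mathcal{C}}$.

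Finally, I would invoke the standard alternating-sum argument: setting $H=\sum_{i=0}^n(-1)^i\widetilde{h}_i$, a direct bookkeeping computation using \eqref{SH} shows that the diagonal pairs $(i=j)$ and $(i=j+1)$ cancel via the second identity, the off-diagonal terms combine with $H\partial$ to cancel via the first and third identities, and the boundary values $\delta_0^{\mathcal{C}}h_0$ and $\delta_{n+1}^{\mathcal{C}}h_n$ produce the right-hand side $GF-\id$. Hence $GF$ and $\id$ induce the same map on $\Hg_\bullet(\mathcal{M}\otimes_{\mathcal{A}}\mathcal{B})$, and symmetrically $FG$ induces the identity on $\Hg_\bullet(\mathcal{M}\otimes_{\mathcal{A}}\mathcal{C})$, so $F$ and $G$ descend to mutually inverse isomorphisms. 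I expect the main obstacle to be the well-definedness check for the lift $\widetilde{h}_i$ on the balanced tensor product—which is precisely the motivation for building item (i) into Definition \ref{PreSH}—while the alternating-sum identity is routine but sign-heavy.
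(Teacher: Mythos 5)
Your proposal follows essentially the same route as the paper's proof: define $F$ and $G$ by tensoring with the identity on $\mathcal{M}$, lift the homotopies via $\sigma_i^{\mathcal{M}}(m)\otimes h_i(b)$ (using item (i) of Definition \ref{PreSH} for well-definedness), and verify the identities \eqref{SH} on the tensor product. Your explicit treatment of the final alternating-sum conversion to a chain homotopy is a detail the paper leaves implicit, but the argument is the same.
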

\begin{proof}
We are given presimplicial morphisms $f:\mathcal{B}\longrightarrow\mathcal{C}$ and $g:\mathcal{C}\longrightarrow\mathcal{B}$ such that $gf$ is presimplicially homotopic to $\id_{\mathcal{B}}$ and $fg$ is presimplicially homotopic to $\id_{\mathcal{C}}$. Thus, for all $n\geq0$ and $0\leq i\leq n$ there exists maps $h_i:B_n\longrightarrow B_{n+1}$ and $t_i:C_n\longrightarrow C_{n+1}$ such that each satisfies Definition \ref{PreSH} appropriately. Without loss of generality, we say that $\delta_0^{\mathcal{B}}h_0(b)=g_nf_n(b)$ and $\delta_{n+1}^{\mathcal{B}}h_n(b)=b$. Likewise $\delta_0^{\mathcal{C}}t_0(c)=f_ng_n(c)$ and $\delta_{n+1}^{\mathcal{C}}t_n(c)=c$.

Recall that $D_i^{\mathcal{B}}:M_n\otimes_{A_n}B_n\longrightarrow M_{n-1}\otimes_{A_{n-1}}B_{n-1}$ is given by $$D_i^{\mathcal{B}}(m\otimes_{A_n}b)=\delta_i^{\mathcal{M}}(m)\otimes_{A_{n-1}}\delta_i^{\mathcal{B}}(b)$$ for all $n\geq0$ and $0\leq i\leq n$. Likewise for $D_i^{\mathcal{C}}:M_n\otimes_{A_n}C_n\longrightarrow M_{n-1}\otimes_{A_{n-1}}C_{n-1}$.

Define $h_i':M_n\otimes_{A_n}B_n\longrightarrow M_{n+1}\otimes_{A_{n+1}}B_{n+1}$ by $$h_i'(m\otimes_{A_n}b)=\sigma_i^{\mathcal{M}}(m)\otimes_{A_{n+1}}h_i(b)$$ for $0\leq i\leq n$. One can check that $h_i'$ is well-defined. Likewise for $t_i':M_n\otimes_{A_n}C_n\longrightarrow M_{n+1}\otimes_{A_{n+1}}C_{n+1}$.

Next, define $F:\mathcal{M}\otimes_{\mathcal{A}}\mathcal{B}\longrightarrow\mathcal{M}\otimes_{\mathcal{A}}\mathcal{C}$ by $$F_n(m\otimes_{A_n}b)=m\otimes_{A_n}f_n(b).$$ Notice that for all $n\geq0$ and $0\leq i\leq n$ we have that $$F_{n-1}D_i^{\mathcal{B}}(m\otimes_{A_n}b)=F_{n-1}(\delta_i^{\mathcal{M}}(m)\otimes_{A_{n-1}}\delta_i^{\mathcal{B}}(b))=\delta_i^{\mathcal{M}}(m)\otimes_{A_{n-1}}f_{n-1}\delta_i^{\mathcal{B}}(b),$$ and $$D_i^{\mathcal{C}}F_n(m\otimes_{A_n}b)=D_i^{\mathcal{C}}(m\otimes_{A_n}f_n(b))=\delta_i^{\mathcal{M}}(m)\otimes_{A_{n-1}}\delta_i^{\mathcal{C}}f_n(b),$$ which are equal due to \eqref{SM}. One can verify that $F$ is well-defined. Hence $F$ is a morphism of presimplicial $\mathbbm{k}$-modules. Likewise for $G:\mathcal{M}\otimes_{\mathcal{A}}\mathcal{C}\longrightarrow\mathcal{M}\otimes_{\mathcal{A}}\mathcal{B}$.

Our goal will be to show that $h_i'$ is a presimplicial homotopy (as $\mathbbm{k}$-modules) between $GF$ and $\id_{\mathcal{M}\otimes_{\mathcal{A}}\mathcal{B}}$ (from $\mathcal{M}\otimes_{\mathcal{A}}\mathcal{B}$ to $\mathcal{M}\otimes_{\mathcal{A}}\mathcal{B}$). Likewise for $t_i'$ between $FG$ and $\id_{\mathcal{M}\otimes_{\mathcal{A}}\mathcal{C}}$ (from $\mathcal{M}\otimes_{\mathcal{A}}\mathcal{C}$ to $\mathcal{M}\otimes_{\mathcal{A}}\mathcal{C}$).

Observe that for all $n\geq0$ we have the conditions in \eqref{SH} satisfied as $\mathbbm{k}$-modules. That is,
\begin{align*}
D_i^{\mathcal{B}}h_j'(m\otimes_{A_n}b)&=D_i^{\mathcal{B}}(\sigma_j^{\mathcal{M}}(m)\otimes_{A_{n+1}}h_j(b))\\
&=\delta_i^{\mathcal{M}}\sigma_j^{\mathcal{M}}(m)\otimes_{A_n}\delta_i^{\mathcal{B}}h_j(b)\\
&=\sigma_{j-1}^{\mathcal{M}}\delta_i^{\mathcal{M}}(m)\otimes_{A_n}h_{j-1}\delta_i^{\mathcal{B}}(b)\\
&=h_{j-1}'(\delta_i^{\mathcal{M}}(m)\otimes_{A_{n-1}}\delta_i^{\mathcal{B}}(b))\\
&=h_{j-1}'D_i^{\mathcal{B}}(m\otimes_{A_n}b)
\end{align*}
for $i<j$. Next, for $0<i\leq n$ we have
\begin{align*}
D_i^{\mathcal{B}}h_i'(m\otimes_{A_n}b)&=D_i^{\mathcal{B}}(\sigma_i^{\mathcal{M}}(m)\otimes_{A_{n+1}}h_i(b))\\
&=\delta_i^{\mathcal{M}}\sigma_i^{\mathcal{M}}(m)\otimes_{A_n}\delta_i^{\mathcal{B}}h_i(b)\\
&=\delta_i^{\mathcal{M}}\sigma_{i-1}^{\mathcal{M}}(m)\otimes_{A_n}\delta_i^{\mathcal{B}}h_{i-1}(b)\\
&=D_i^{\mathcal{B}}(\sigma_{i-1}^{\mathcal{M}}(m)\otimes_{A_{n+1}}h_{i-1}(b))\\
&=D_i^{\mathcal{B}}h_{i-1}'(m\otimes_{A_n}b).
\end{align*}
Furthermore, we have that
\begin{align*}
D_i^{\mathcal{B}}h_j'(m\otimes_{A_n}b)&=D_i^{\mathcal{B}}(\sigma_j^{\mathcal{M}}(m)\otimes_{A_{n+1}}h_j(b))\\
&=\delta_i^{\mathcal{M}}\sigma_j^{\mathcal{M}}(m)\otimes_{A_n}\delta_i^{\mathcal{B}}h_j(b)\\
&=\sigma_j^{\mathcal{M}}\delta_{i-1}^{\mathcal{M}}(m)\otimes_{A_n}h_j\delta_{i-1}^{\mathcal{B}}(b)\\
&=h_j'(\delta_{i-1}^{\mathcal{M}}(m)\otimes_{A_{n-1}}\delta_{i-1}^{\mathcal{B}}(b))\\
&=h_j'D_{i-1}^{\mathcal{B}}(m\otimes_{A_n}b)
\end{align*}
whenever $i>j+1$. Finally we have that
\begin{align*}
D_0^{\mathcal{B}}h_0'(m\otimes_{A_n}b)&=D_0^{\mathcal{B}}(\sigma_0^{\mathcal{M}}(m)\otimes_{A_{n+1}}h_0(b))\\
&=\delta_0^{\mathcal{M}}\sigma_0^{\mathcal{M}}(m)\otimes_{A_n}\delta_0^{\mathcal{B}}h_0(b)\\
&=m\otimes_{A_n}g_nf_n(b)\\
&=G_n(m\otimes_{A_n}f_n(b))\\
&=G_nF_n(m\otimes_{A_n}b)
\end{align*}
and
\begin{align*}
D_{n+1}^{\mathcal{B}}h_n'(m\otimes_{A_n}b)&=D_{n+1}^{\mathcal{B}}(\sigma_n^{\mathcal{M}}(m)\otimes_{A_{n+1}}h_n(b))\\
&=\delta_{n+1}^{\mathcal{M}}\sigma_n^{\mathcal{M}}(m)\otimes_{A_n}\delta_{n+1}^{\mathcal{B}}h_n(b)\\
&=m\otimes_{A_n}b\\
&=\id_{\mathcal{M}\otimes_{\mathcal{A}}\mathcal{B}}(m\otimes_{A_n}b).
\end{align*}

Thus $h_i'$ is a presimplicial homotopy between $GF$ and $\id_{\mathcal{M}\otimes_{\mathcal{A}}\mathcal{B}}$ (as $\mathbbm{k}$-modules). Similarly we get that $t_i'$ is a presimplicial homotopy between $FG$ and $\id_{\mathcal{M}\otimes_{\mathcal{A}}\mathcal{C}}$ (as $\mathbbm{k}$-modules). Hence, the result follows.
\end{proof}

Following Proposition \ref{Replace}, one can then compute the homology of chain complexes built in this context with an appropriate replacement. Specifically, referencing Example \ref{ex1} and the usual Hochschild homology, if there was a simplicial module $\mathcal{C}$ such that there exists a presimplicial homotopy equivalence $f:\mathcal{B}(A)\longrightarrow\mathcal{C}$, then we have that
$$
\Hg_\bullet(A,M)=\Hg_\bullet(\mathcal{M}(M)\otimes_{\mathcal{A}(A\otimes A^{op})}\mathcal{B}(A))\cong\Hg_\bullet(\mathcal{M}(M)\otimes_{\mathcal{A}(A\otimes A^{op})}\mathcal{C}).
$$
Likewise for Example \ref{ex2}, if there was a simplicial module $\mathcal{D}$ such that there exists a presimplicial homotopy equivalence $f:\mathcal{B}(A,B,\varepsilon)\longrightarrow\mathcal{D}$, then one could compute the secondary Hochschild homology in an alternative way. In notation, we would have
$$
\Hg_\bullet((A,B,\varepsilon);M)=\Hg_\bullet(\mathcal{S}(M)\otimes_{\mathcal{A}(A,B,\varepsilon)}\mathcal{B}(A,B,\varepsilon))\cong\Hg_\bullet(\mathcal{S}(M)\otimes_{\mathcal{A}(A,B,\varepsilon)}\mathcal{D}).
$$

One can get similar results using replacements via presimplicial homotopy equivalence for the rest of the examples discussed at the end of Section \ref{prelims}. Finally, for the sake of completion, switching to the context of cosimplicial modules by way of the Hom Lemma (discussed in \cite{LSS}), we get an analogous result.

\begin{proposition}
Let $\mathcal{M}$ be a cosimplicial left module over $\mathcal{A}$. If there exists a presimplicial homotopy equivalence $f:\mathcal{B}\longrightarrow\mathcal{C}$, then $$\Hg^\bullet(\Hom_{\mathcal{A}}(\mathcal{B},\mathcal{M}))\cong\Hg^\bullet(\Hom_{\mathcal{A}}(\mathcal{C},\mathcal{M})).$$
\end{proposition}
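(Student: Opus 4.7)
The plan is to dualize the proof of Proposition \ref{Replace}: replace the tensor product by $\Hom_{\mathcal{A}}(-,\mathcal{M})$, which is contravariant in its first slot and, by the Hom Lemma, produces a cosimplicial $\mathbbm{k}$-module. From the presimplicial homotopy equivalence $f$, I have $g:\mathcal{C}\to\mathcal{B}$ together with presimplicial homotopies $h_i:\mathcal{B}_n\to\mathcal{B}_{n+1}$ between $gf$ and $\id_{\mathcal{B}}$, and $t_i:\mathcal{C}_n\to\mathcal{C}_{n+1}$ between $fg$ and $\id_{\mathcal{C}}$.

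First I would define precosimplicial $\mathbbm{k}$-module morphisms
\[
F:\Hom_{\mathcal{A}}(\mathcal{C},\mathcal{M})\longrightarrow\Hom_{\mathcal{A}}(\mathcal{B},\mathcal{M}),\qquad F_n(\varphi)=\varphi\circ f_n,
\]
and $G:\Hom_{\mathcal{A}}(\mathcal{B},\mathcal{M})\longrightarrow\Hom_{\mathcal{A}}(\mathcal{C},\mathcal{M})$ by $G_n(\psi)=\psi\circ g_n$. Since $f_n$ and $g_n$ are $\mathcal{A}_n$-module morphisms, both are well-defined at each level, and \eqref{SM} for $f$ and $g$ translates directly into compatibility of $F$ and $G$ with the cosimplicial coface structure on the Hom complexes (which, by the Hom Lemma, is built from pre/postcomposition with $\delta_i^{\mathcal{B}}$, $\delta_i^{\mathcal{C}}$, and the cofaces of $\mathcal{M}$). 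Observe that $FG$ is precomposition by $gf$ and $GF$ is precomposition by $fg$.

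Next I would lift $h_i$ and $t_i$ to cosimplicial homotopies via the codegeneracies $\sigma_i^{\mathcal{M}}:\mathcal{M}_{n+1}\to\mathcal{M}_n$ of the cosimplicial module $\mathcal{M}$. Define
\[
H^i:\Hom_{\mathcal{A}_{n+1}}(\mathcal{B}_{n+1},\mathcal{M}_{n+1})\longrightarrow\Hom_{\mathcal{A}_n}(\mathcal{B}_n,\mathcal{M}_n),\qquad H^i(\varphi)=\sigma_i^{\mathcal{M}}\circ\varphi\circ h_i,
\]
for $0\leq i\leq n$, and analogously $T^i$ using $t_i$. The $\mathcal{A}_n$-linearity of $H^i(\varphi)$ will follow from Definition \ref{PreSH}(i) together with the compatibility of $\sigma_i^{\mathcal{M}}$ with $\sigma_i^{\mathcal{A}}$ built into the cosimplicial module structure. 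The cosimplicial analogues of the identities in \eqref{SH} each reduce, after unraveling compositions, to one of the identities for $h_i$ sandwiched between $\sigma_i^{\mathcal{M}}$ and $\varphi$; the boundary cases $\delta_0^{\mathcal{B}}h_0=g_nf_n$ and $\delta_{n+1}^{\mathcal{B}}h_n=\id_{\mathcal{B}_n}$ become the statements that $H^i$ interpolates between $FG$ and $\id_{\Hom_{\mathcal{A}}(\mathcal{B},\mathcal{M})}$. The symmetric computation with $T^i$ yields $GF\sim\id_{\Hom_{\mathcal{A}}(\mathcal{C},\mathcal{M})}$, and passing to cohomology gives the asserted isomorphism.

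The principal obstacle is bookkeeping the direction reversal: contravariance turns a simplicial homotopy on the source into a cosimplicial homotopy on the Hom complex, which forces the appearance of the codegeneracies of $\mathcal{M}$ and requires the correct indexing convention for a cosimplicial homotopy. Once the cosimplicial structure from the Hom Lemma and this convention are made explicit, every displayed computation from the proof of Proposition \ref{Replace} carries over essentially verbatim, with tensor-factor arrangements replaced by compositions.
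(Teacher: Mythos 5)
Your proposal is correct and is exactly the argument the paper intends: the paper's own proof is literally ``Similar to that of Proposition \ref{Replace},'' and your dualization---$F_n(\varphi)=\varphi\circ f_n$, $G_n(\psi)=\psi\circ g_n$ for the cochain maps, and $H^i(\varphi)=\sigma_i^{\mathcal{M}}\circ\varphi\circ h_i$ for the degree-lowering homotopies, with $\mathcal{A}_n$-linearity coming from Definition \ref{PreSH}(i) and the codegeneracy compatibility of $\mathcal{M}$---is precisely the contravariant analogue of the displayed computations in Proposition \ref{Replace}. You have in fact supplied more detail than the paper does.
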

\begin{proof}
Similar to that of Proposition \ref{Replace}.
\end{proof}



\begin{thebibliography}{99}


\bibitem{BHL} Kylie Bennett, Elizabeth Heil, and Jacob Laubacher. Secondary Hochschild cohomology and derivations. \emph{Bull. Iranian Math. Soc.}, 49(4):Paper No. 49, 11 pp., 2023.

\bibitem{CHL} Samuel Carolus, Samuel A. Hokamp, and Jacob Laubacher. Deformation theories controlled by Hochschild cohomologies. \emph{S\~ao Paulo J. Math. Sci.}, 14(2):481--495, 2020.

\bibitem{CL} Samuel Carolus and Jacob Laubacher. Simplicial structures over the 3-sphere and generalized higher order Hochschild homology. \emph{Categ. Gen. Algebr. Struct. Appl.}, 15(1):93--143, 2021.

\bibitem{CE} Henri Cartan and Samuel Eilenberg. \emph{Homological algebra}. Reprint of the 1956 original, Princeton Landmarks in Mathematics, Princeton University Press, Princeton, New Jersey, 1999.

\bibitem{G} Murray Gerstenhaber. On the deformation of rings and algebras. \emph{Ann. of Math. (2)}, 79:59--103, 1964.

\bibitem{H} Gerhard Hochschild. On the cohomology groups of an associative algebra. \emph{Ann. of Math. (2)}, 46:58--67, 1945.

\bibitem{Laub} Jacob Laubacher. \emph{Secondary Hochschild and cyclic (co)homologies}. OhioLINK Electronic Theses and Dissertations Center, Columbus, Ohio, 2017. Dissertation (Ph.D.)--Bowling Green State University.

\bibitem{L} Jacob Laubacher. Properties of the secondary Hochschild homology. \emph{Algebra Colloq.}, 25(2):225--242, 2018.

\bibitem{L2} Jacob Laubacher. Secondary Hochschild homology and differentials. \emph{Mediterr. J. Math.}, 20(1): Paper No. 52, 11 pp., 2023.

\bibitem{LSS} Jacob Laubacher, Mihai D. Staic, and Alin Stancu. Bar simplicial modules and secondary cyclic (co)homology. \emph{J. Noncommut. Geom.}, 12(3):865--887, 2018.

\bibitem{S} Mihai D. Staic. Secondary Hochschild cohomology. \emph{Algebr. Represent. Theory}, 19(1):47--56, 2016.

\bibitem{SS} Mihai D. Staic and Alin Stancu. Operations on the secondary Hochschild cohomology. \emph{Homology Homotopy Appl.}, 17(1):129--146, 2015.

\end{thebibliography}
\end{document}